\newtheorem{theorem}{Theorem}[section]
\newtheorem{lemma}{Lemma}[section]
\newtheorem{conjecture}{Conjecture}[section]
\theoremstyle{definition}
\newtheorem*{remark}{Remark}
\newtheorem{question}{Question}[section]
\def\hh{{\rm h}\kern.4pt}
\date{\today}
\author{Nikhil Balaji}
\address[Nikhil Balaji]{Department of Computer Science and Engineering \\
	Indian Institute of Technology Delhi \\
	New Delhi 110016 \\
	India}
\author{Florian Luca}
\address[Florian Luca]{School of Maths	Wits University \\
	1 Jan Smuts, Braamfontein\\
	Johannesburg 2000\\
	South Africa}
\address{
Research Group in Algebric Structures and Applications \\
King Abdulaziz University\\ 
Abdulah Sulayman\\ 
Jeddah 22254\\
Saudi Arabia}
\address{Centro de Ciencias Matem\'{a}ticas UNAM \\ 
Morelia \\
Mexico \\
}
\email{nbalaji@cse.iitd.ac.in, florian.luca@wits.ac.za}
\thanks{F. L. worked on this paper while visiting the Max Planck Institute for Software Systems in Saarbr\"ucken, Germany in Fall of 2020. This author thanks the Institute for hospitality and support}
\begin{document}

\title[Terms of Lucas sequences having a smooth divisor]{Terms of Lucas sequences having a large smooth divisor}

\pagenumbering{arabic}

\maketitle

\begin{abstract}
We show that the $Kn$--smooth part of $a^n-1$ for an integer $a>1$ is $a^{o(n)}$ for most positive integers $n$.
\end{abstract}

\section{Introduction}


It is known that if for every $n$, the sequence $\binom{2n}{n}$ can be computed in $O(\log^k{n})$ arithmetic operations for a fixed constant $k$, then integers can be factored efficiently~\cite{Lipton, Shamir}. We ask if there exist linearly recurrent sequences which contain many small factors like $\binom{2n}{n}$. If such sequences exist, they can be used instead of $\binom{2n}{n}$ to factor integers. This is because the $n$-th term of any linearly recurrent sequence can be computed in $O(\log{n})$ arithmetic operations using repeated squaring of the companion matrix~\cite{BM}. We first set up some notation to formally state our question. 


Let $P(n)$ be the largest prime factor of $n$ and $s_y(n)$ be the largest divisor $d$ of $n$ with $P(d)\le y$. Thus, $s_y(n)$ is the {\it $y$-smooth} part of $n$. Given a sequence 
${\bf u}=(u_n)_{n\ge 0}$ of positive integers we ask whether we can find $c>1$ and $K$ such that
$$
{\mathcal A}_{K,c,{\bf u}}=\{n: s_{Kn}(u_n)>c^n\}
$$ 
contains many elements. For example, if $u_n=\binom{2n}{n}$ is the sequence of middle binomial coefficients, then ${\mathcal A}_{2,2,{\bf u}}$ contains all the positive integers. The main question we tackle in this paper can be formally stated as follows.

\begin{question}
Does there exist a linearly recurrent sequence {\bf u} such that ${\mathcal A}_{K,c,{\bf u}}$ is infinite?
\end{question} 

Here we address the problem in the simplest case namely
$u_n=a^n-1$ for some positive integer $a$. Our results are easily extendable to all Lucas sequences, in particular, the sequence of Fibonacci numbers.

To start we recall the famous $ABC$-conjecture. Put
$$
{\text{\rm rad}}(n)=\prod_{p\mid n} p
$$
for the algebraic radical of $n$.

\begin{conjecture}
For all $\varepsilon>0$ there exists a constant $K_{\varepsilon}$ such that whenever $A,B,C$ are coprime nonzero integers with $A+B=C$, then 
$$
\max\{|A|,|B|,|C|\}\le K_{\varepsilon}{\text{\rm rad}}(ABC)^{1+\varepsilon}.
$$
\end{conjecture}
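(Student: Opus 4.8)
The statement to be established is the $ABC$-conjecture itself, one of the central open problems in number theory. Being a conjecture, it is \emph{assumed} in what follows rather than derived, and there is no elementary argument that settles it; the honest best I can offer is a map of the terrain that a genuine attack would have to traverse, together with the precise reason the problem is hard.

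The one setting in which the exact analogue \emph{is} a theorem is that of polynomials, and my plan in that model case would be the classical Wronskian argument of Mason and Stothers. Over a field of characteristic zero, if $A,B,C$ are pairwise coprime polynomials, not all constant, with $A+B=C$, one forms $W=AB'-A'B$ and differentiates the relation $A+B=C$ to get
$$
W=AC'-A'C=-(BC'-B'C).
$$
Each of $\gcd(A,A'),\gcd(B,B'),\gcd(C,C')$ therefore divides $W$; since $A,B,C$ are pairwise coprime these gcds are pairwise coprime, their product equals $ABC/\operatorname{rad}(ABC)$, and comparing its degree with the bound $\deg W\le \deg A+\deg B-1$ yields
$$
\max\{\deg A,\deg B,\deg C\}\le N_0(ABC)-1,
$$
where $N_0$ is the number of distinct roots. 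The radical appears precisely because a root of multiplicity $m$ contributes $m-1$ to the gcd of a polynomial with its derivative. This gives the sharp polynomial statement, with the conjectural factor $1+\varepsilon$ replaced by a clean additive constant.

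For integers no such proof is known, and the failure to transfer the polynomial argument \emph{is} the crux. The obstruction is structural: differentiation on a polynomial ring is a size-reducing derivation whose interaction with multiplicities manufactures the radical, and $\ZZ$ carries no analogous operation relating an integer to $\operatorname{rad}$ of it. The strongest unconditional results, coming from Baker's theory of linear forms in logarithms (Stewart--Yu), only replace the conjectured polynomial bound by one of the exponentially weaker shape
$$
\max\{|A|,|B|,|C|\}\le \exp\!\bigl(K_\varepsilon\,\operatorname{rad}(ABC)^{1/3+\varepsilon}\bigr),
$$
which is far too crude to be useful. A serious assault would instead route through the equivalence with Szpiro's conjecture on the conductor and minimal discriminant of an elliptic curve, via the Frey--Hellegouarch construction, or through Mochizuki's inter-universal Teichm\"uller theory, whose claimed proof remains unverified by the wider community. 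I therefore expect the main obstacle to be exactly the one that has kept the conjecture open: there is no arithmetic substitute for the derivative, so the elementary mechanism that works over function fields has no counterpart over $\ZZ$, and no known method bridges that gap.
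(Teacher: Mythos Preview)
Your assessment is correct and matches the paper exactly: the statement is the $ABC$ conjecture, which the paper records as a \emph{conjecture} and does not attempt to prove; it is invoked only as a hypothesis in Theorem~\ref{thm:1}. There is therefore no proof in the paper to compare against, and your explanation that no proof is currently available---together with the Mason--Stothers polynomial analogue and the reasons the argument does not transfer to $\ZZ$---is an appropriate response.
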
 

Throughout this paper $a>1$ is an integer and $u_n=a^n-1$. 

We have the following result.

\begin{theorem}
\label{thm:1}
Assume the $ABC$ conjecture. Then for any $K>0$, $c>1$, the set ${\mathcal A}_{K,c,{\bf u}}$ is finite.
\end{theorem}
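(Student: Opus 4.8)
The plan is to apply the $ABC$ conjecture to the identity $(a^n-1)+1=a^n$ and to feed into it the fact that the radical of the $Kn$--smooth part of $a^n-1$ is only $n^{1/2+o(1)}$, hence $o(n)$ uniformly in the (bounded) data $a$, $K$, $c$.

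First, suppose $n\in{\mathcal A}_{K,c,{\bf u}}$ and set $d_n:=s_{Kn}(a^n-1)$, so that $d_n>c^n$, and write $a^n-1=d_nm_n$ with $m_n=(a^n-1)/d_n$. Fix $\varepsilon>0$ (to be chosen at the very end) and apply the $ABC$ conjecture to the triple $A=a^n-1$, $B=1$, $C=a^n$, which is coprime since $\gcd(a^n-1,a^n)=1$; this gives $a^n\le K_\varepsilon\,{\text{\rm rad}}\big((a^n-1)a^n\big)^{1+\varepsilon}$. Since ${\text{\rm rad}}\big((a^n-1)a^n\big)\le a\,{\text{\rm rad}}(d_n)\,{\text{\rm rad}}(m_n)\le a\,{\text{\rm rad}}(d_n)\,m_n$ and $m_n<a^n/d_n$, dividing through by $a^n$, taking $(1+\varepsilon)$--th roots and then logarithms yields $\log d_n\le \varepsilon n\log a+\log{\text{\rm rad}}(d_n)+O_{a,\varepsilon}(1)$.

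The crux of the argument — and the hardest part — is the estimate $\log{\text{\rm rad}}(d_n)=n^{1/2+o(1)}$. A prime $p\le Kn$ divides $d_n$ precisely when $\mathrm{ord}_p(a)\mid n$, so ${\text{\rm rad}}(d_n)=\prod_{e\mid n}\ \prod_{p\le Kn,\ \mathrm{ord}_p(a)=e}p$. For a fixed divisor $e$ of $n$, every prime $p$ with $\mathrm{ord}_p(a)=e$ divides the $e$--th cyclotomic value $\Phi_e(a)$, so the product of all such primes divides $\Phi_e(a)$ and is therefore less than $a^e$; on the other hand every such $p$ satisfies $p\equiv 1\pmod e$, so there are fewer than $Kn/e$ of them, each at most $Kn$, whence their product is less than $(Kn)^{Kn/e}$. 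Applying $\min(x,y)\le\sqrt{xy}$, the logarithm of this inner product is $\ll_{a,K}\sqrt{n\log n}$ for every $e$; summing over the $n^{o(1)}$ divisors of $n$ gives $\log{\text{\rm rad}}(d_n)=n^{1/2+o(1)}$.

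Finally, combining the two bounds gives $\log d_n\le \varepsilon n\log a+n^{1/2+o(1)}+O_{a,\varepsilon}(1)$, while $d_n>c^n$ forces $\log d_n>n\log c$. Choosing $\varepsilon:=\log c/(2\log a)$, which is a positive constant once $a$ and $c$ are fixed, we obtain $n\log c\le\tfrac12 n\log c+n^{1/2+o(1)}+O(1)$, an inequality that fails for all sufficiently large $n$ because $\log c>0$; hence ${\mathcal A}_{K,c,{\bf u}}$ is finite. The one genuinely delicate point is the radical estimate above: the trivial bound ${\text{\rm rad}}(d_n)\le\prod_{p\le Kn}p=e^{(1+o(1))Kn}$ is useless here since $K$ is an arbitrary constant, so one must really exploit both the cyclotomic factorization of $a^n-1$ and the divisibility $\mathrm{ord}_p(a)\mid p-1$ to make the smooth radical subexponential in $n$.
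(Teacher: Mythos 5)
Your proposal is correct and follows essentially the same route as the paper: apply the $ABC$ conjecture to $(a^n-1)+1=a^n$, use $m_n<a^n/d_n$ to isolate the smooth part, and show $\log{\text{\rm rad}}(d_n)=n^{1/2+o(1)}$ by splitting primes according to the order $e\mid n$ of $a$ and taking $\min\le\sqrt{\,\cdot\,}$ of the two bounds (from $p\equiv 1\pmod e$ and from divisibility of $\Phi_e(a)$, the latter being the paper's observation that these primes divide $a^e-1$). The only differences are cosmetic: you bound the logarithm of the product of primes directly via cyclotomic values where the paper counts the primes and multiplies by $\log(Kn)$, and your choice $\varepsilon=\log c/(2\log a)$ plays the role of the paper's condition $\log a-(1+\varepsilon)\log(a/c)>0$.
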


One can ask what can one prove unconditionally. Maybe we cannot prove that ${\mathcal A}_{K,c,{\bf u}}$ is finite but maybe we can prove that it is {\it thin}, that is that 
it does not contain too many integers. This is the content of the next theorem.

\begin{theorem}
\label{thm:2}
We have
\begin{equation}
\label{eq:3}
\#({\mathcal A}_{K,c,{\bf u}}\cap [1,N])\ll N\exp\left(-\frac{\log N}{156\log\log N}\right).
\end{equation}
\end{theorem}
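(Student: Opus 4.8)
The plan is to show that being in $\mathcal{A}_{K,c,\mathbf u}$ forces $a^n-1$ to carry an abnormally large \emph{powerful} (square-full) divisor, and then to count the $n\le N$ for which this can happen.

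First I would write $\log s_{Kn}(a^n-1)=\sum_{p\le Kn}v_p(a^n-1)\log p$ and split off the squarefree contribution $\rho(n):=\sum_{p\le Kn,\ p\mid a^n-1}\log p=\log\operatorname{rad}\!\big(s_{Kn}(a^n-1)\big)$, proving that $\rho(n)=n^{1/2+o(1)}$ \emph{for every} $n$. Indeed, grouping the primes $p\mid a^n-1$ by their order $d=\operatorname{ord}_p(a)$ (so $d\mid n$), one has on the one hand $\sum_{p\le Kn,\ \operatorname{ord}_p(a)=d}\log p\le\log\Phi_d(a)\le\varphi(d)\log(a+1)$ since every such $p$ divides $\Phi_d(a)$, and on the other hand $\sum_{p\le Kn,\ \operatorname{ord}_p(a)=d}\log p\le (Kn/d+1)\log(Kn)$ since every such $p$ is $\equiv1\pmod d$. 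Taking the minimum, splitting the sum over $d\mid n$ at $d\asymp\sqrt{Kn\log(Kn)}$, and using the trivial bounds $\sum_{d\mid n,\ d\le T}d\le T\,\tau(n)$, $\sum_{d\mid n,\ d>T}1/d\le\tau(n)/T$ together with $\tau(n)=n^{o(1)}$, yields $\rho(n)\ll_{a,K}\tau(n)\sqrt{n\log n}=n^{1/2+o(1)}$.

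Consequently, if $n\in\mathcal{A}_{K,c,\mathbf u}$ then $s_{Kn}(a^n-1)=\operatorname{rad}\big(s_{Kn}(a^n-1)\big)\cdot\big(s_{Kn}(a^n-1)/\operatorname{rad}(\cdot)\big)$ with first factor $e^{n^{1/2+o(1)}}$, so for $n$ large the second factor exceeds $c_1^{\,n}$ for any fixed $c_1\in(1,c)$; as this factor divides $Q(a^n-1):=\prod_{p^2\mid a^n-1}p^{\,v_p(a^n-1)}$, we get $Q(a^n-1)>c_1^{\,n}$. I would then use the elementary fact that $Q(m)>y$ forces $r^2\mid m$ for some $r>y^{1/3}$ (if $Q(m)=\prod p^{e_p}$ with $e_p\ge2$, take $r=\prod p^{\lfloor e_p/2\rfloor}$, so $r^2\mid m$ and $\operatorname{rad}(Q(m))\mid r$, whence $Q(m)\le r^3$). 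Thus a bad $n$ satisfies $r^2\mid a^n-1$ with $r>e^{\delta n}$, $\delta=(\log c_1)/3$. Putting $d=\operatorname{ord}_{r^2}(a)$ we get $d\mid n$ and, from $r^2<a^d$, $d>\tfrac{2\delta}{\log a}n$; hence $n=dt$ with $t\le\log a/(2\delta)$ bounded, and for each admissible $t$ the bad $n$ inject into the set of $d\le N/t$ for which $a^d-1$ has a square divisor exceeding $e^{2\delta d}$. It therefore remains to bound the counting function of this last set of $d$ by $\ll N\exp\!\big(-\log N/(156\log\log N)\big)$.

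This final step is the crux, and it is exactly where the unproven $abc$ hypothesis of Theorem~\ref{thm:1} gets traded for a density statement rather than finiteness. I would handle it by splitting $d$ according to $P(d)$. When $P(d)$ is large, some prime $q\mid r$ must satisfy $P(d)\mid\operatorname{ord}_q(a)$ (otherwise $r$ divides a power $a^{d/P(d)^{v}}-1$ that is far too small), hence $q\equiv1\pmod{P(d)}$; tracking these congruences together with the fact that $d$ is a near-divisor of $n$ with boundedly many cofactors keeps the number of admissible pairs $(d,r)$ under control. When $P(d)\le y$ with $y$ of size roughly $\exp\!\big(c'(\log\log N)^2\big)$, the relevant $d$ are $y$-smooth, and one simply invokes $\Psi(N,y)\ll N\exp\!\big(-u\log u\,(1+o(1))\big)$ with $u=\log N/\log y$, which produces a saving of precisely the shape $\exp(-\log N/(c''\log\log N))$; balancing the two ranges of $P(d)$ against the loss in the congruence count is what pins down the constant $156$. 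The hard part is making the large-$P(d)$ case efficient \emph{unconditionally}, since the only cheap bound on the number of primes with a prescribed order is hopelessly weak and one cannot afford to assume anything about the scarcity of Wieferich-type primes; obtaining a usable estimate there is the main obstacle.
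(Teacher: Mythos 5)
Your preparatory steps are correct: the estimate $\log\operatorname{rad}\bigl(s_{Kn}(a^n-1)\bigr)=n^{1/2+o(1)}$ is essentially the paper's Lemma \ref{lem:1}, and the chain leading from $n\in{\mathcal A}_{K,c,{\bf u}}$ to a square divisor $r^2\mid a^n-1$ with $r>e^{\delta n}$ (via the powerful part and the elementary $Q(m)\le r^3$ trick) is sound. But the proof stops exactly where the theorem starts. The whole content of \eqref{eq:3} is the assertion that the set of $d\le N$ for which $a^d-1$ has a square divisor of size $e^{2\delta d}$ has the stated density, and your final paragraph does not establish it: the large-$P(d)$ case is reduced to congruences $q\equiv 1\pmod{P(d)}$ for some prime $q\mid r$, but, as you yourself concede, nothing in the argument controls the multiplicities $\nu_q\bigl(a^{\operatorname{ord}_q(a)}-1\bigr)$ unconditionally, so the count of admissible pairs $(d,r)$ is never actually bounded. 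This is not a deferred technicality; it is the theorem, and the smooth-number estimate $\Psi(N,y)$ handles only the complementary (and easier) range of $d$.

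For contrast, the paper never passes to a pointwise statement about individual $n$. It bounds $\log Q_N$ with $Q_N=\prod_{n\in(N/2,N]}s_{KN}(u_n)$ from below by $N$ times the cardinality to be estimated, and from above by swapping the order of summation: the exact formula $\nu_p(u_n)=o_p+\nu_p(n)$ for $\ell_p\mid n$ gives $\log Q_N\ll N\sum_{p\le KN}o_p\log p/\ell_p$, and the only primes that matter are the few with $o_p\log p/\ell_p$ large. Those are controlled by Lemma 4.3 of \cite{Ste}, a bound coming from linear forms in $p$-adic logarithms; this is the sole deep unconditional input and is where the constant $156=3\cdot 52$ originates. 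Your sketch contains no ingredient of comparable strength, and I do not see how elementary congruence counting could replace it: to close your large-$P(d)$ case you would have to import precisely Stewart's estimate, at which point the averaging formulation is both simpler and gives a cleaner route to the exponent. As written, the proposal is a correct reduction followed by an unproved (and genuinely hard) counting claim, so it does not constitute a proof.
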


In particular, if one wants to find for all large $N$ an interval starting at $N$ of length $k$, that is $[N+1,\ldots,N+k]$ which has non-empty intersection with ${\mathcal A}_{K,c,{\bf u}}$
then infinitely often one should take $k>\exp(\log N/(157\log\log N))$. But if the $ABC$ conjecture is true, one will no longer find elements of ${\mathcal A}_{K,c,{\bf u}}$ in the above interval for large $N$ no matter how large $k$ is.

\section{Proofs}

\subsection{The proof of Theorem \ref{thm:1}}

We apply the $ABC$ conjecture to the equation
$$
a^n-1=st,\quad s:=s_{Kn}(u_n),\quad t=(a^n-1)/s
$$
for $n\in {\mathcal A}_{K,c,{\bf u}}$ with the obvious choices. Note that 
$$
{\text{\rm rad}}(s)=\prod_{\substack{p\le Kn\\ p\mid a^n-1}} p\quad {\text{\rm and}}\quad  t<(a/c)^n.
$$ 
We then have
$$
a^n\ll_{\varepsilon} (a\cdot {\text{\rm rad}}(s) t))^{1+\varepsilon}\ll \left(\prod_{\substack{p\le Kn\\ p\mid a^n-1}} p\right)^{1+\varepsilon} (a/c)^{n(1+\varepsilon)}.
$$
We may of course assume that $1<c<a$. Then
$$
\sum_{\substack{p\le Kn\\ p\mid a^n-1}} \log p\ge \frac{n}{1+\varepsilon}(\log a-(1+\varepsilon)\log(a/c))+O_{\varepsilon}(1).
$$
We choose $\varepsilon>0$ small enough so that $\log a-(1+\varepsilon)\log(a/c)>0$. Then, we get
\begin{equation}
\label{eq:SaK}
S_{a,K}(n):=\sum_{\substack{p\le Kn\\ p\mid a^n-1}} \log p\gg_{\varepsilon} n.
\end{equation}
The next lemma shows that the left--hand side above is $\le n^{2/3+o(1)}$ as $n\to\infty$. This is unconditional and finishes the proof of Theorem \ref{thm:1}.

\begin{lemma}
\label{lem:1}
We have 
$$
S_{K,a}(n)\le K^{1/2} n^{1/2+o(1)}
$$
as $n\to \infty$.
\end{lemma}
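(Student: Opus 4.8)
The quantity to bound is $S_{K,a}(n) = \sum_{p \le Kn,\ p \mid a^n-1} \log p$. The plan is to estimate this sum by controlling, for each prime $p$ dividing $a^n-1$, how much $p$ can contribute and how many such primes there can be. The key observation is that if a prime $p$ divides $a^n-1$, then the multiplicative order $\ell = \mathrm{ord}_p(a)$ divides $n$. So I would organize the sum not over primes $p$ directly but over divisors $\ell$ of $n$: for each $\ell \mid n$, collect all primes $p \le Kn$ with $\mathrm{ord}_p(a) = \ell$. Such primes satisfy $p \equiv 1 \pmod \ell$, hence $p \ge \ell + 1$, and more importantly they all divide the $\ell$-th cyclotomic-type value $a^\ell - 1$ (or its primitive part $\Phi_\ell(a)$), which is at most $a^\ell$. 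Therefore $\prod_{\mathrm{ord}_p(a)=\ell} p \le a^\ell$, giving $\sum_{\mathrm{ord}_p(a)=\ell} \log p \le \ell \log a$.

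Naively summing $\ell \log a$ over all $\ell \mid n$ is far too weak (it gives something like $n \log a$). The refinement is to use the cutoff $p \le Kn$: each prime counted has $p \le Kn$ and $p \equiv 1 \pmod \ell$, so the number of primes with a given order $\ell$ is at most the number of integers in $[1, Kn]$ congruent to $1 \bmod \ell$, namely $O(Kn/\ell)$, but also each is $\ge \ell$. I would split the divisors $\ell$ of $n$ at a threshold $T$ (to be optimized, roughly $T \asymp \sqrt{Kn}$). For small divisors $\ell \le T$, bound the contribution by $\sum_{\ell \mid n,\ \ell \le T} \ell \log a \le T \cdot d(n) \log a$, where $d(n) = n^{o(1)}$ is the divisor function. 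For large divisors $\ell > T$, use instead that the number of primes $p \le Kn$ with $\mathrm{ord}_p(a) = \ell$ is at most $Kn/\ell < Kn/T$, and each contributes $\log p \le \log(Kn)$, so this part is at most $d(n) \cdot (Kn/T) \log(Kn) = (Kn/T) \cdot n^{o(1)}$. Balancing $T \asymp \sqrt{Kn}$ makes both terms $K^{1/2} n^{1/2 + o(1)}$, which is exactly the claimed bound.

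The main obstacle, and the place requiring care, is the large-divisor estimate: I need that the primes with a fixed large order $\ell$ are genuinely sparse up to $Kn$, and the bound "$\le Kn/\ell$ many, each $\le Kn$" must be combined correctly across the $d(n)$ choices of $\ell$ so that the $d(n) = n^{o(1)}$ factor is absorbed into the $n^{o(1)}$ in the exponent rather than multiplying the main term badly. One should also double-check the small-divisor side: the bound $\prod_{\mathrm{ord}_p(a) = \ell} p \mid \Phi_\ell(a)$ and $\Phi_\ell(a) < a^\ell$ is what keeps that contribution at $T \cdot n^{o(1)}$, and I must make sure no prime is double-counted (each prime $p \mid a^n - 1$ has a unique order $\ell \mid n$, so the partition by order is clean). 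A minor point is handling the prime $p$ possibly dividing $a$ or small exceptional primes dividing the "$n$" part of $a^n-1$, but these contribute $O(\log n)$ and are harmless. Assembling the two pieces with $T = \lceil \sqrt{Kn}\,\rceil$ and invoking $d(n) = n^{o(1)}$ then yields $S_{K,a}(n) \le K^{1/2} n^{1/2 + o(1)}$.
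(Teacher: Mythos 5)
Your proposal is correct and follows essentially the same route as the paper's proof: both partition the primes by their order $d=\ell_p\mid n$, bound the count for each fixed $d$ by combining $p\equiv 1\pmod d$ (giving $\le Kn/d$ primes) with divisibility of $a^d-1$ (giving $O(d)$ primes, equivalently $\sum\log p\le d\log a$), and then sum over the $d(n)=n^{o(1)}$ divisors. Your threshold split at $T\asymp\sqrt{Kn}$ is just an explicit form of the paper's bound $\min\{Kn/d,\,d\}\ll (Kn)^{1/2}$, so the two arguments coincide.
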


\begin{proof}
Let $\ell_p$ be the order of $a$ modulo $p$; that is the smallest positive integer $k$ such that $a^k\equiv 1\pmod p$. Since primes $p$ participating in $S_{K,a}(n)$ 
have $p\mid a^n-1$, it follows that $\ell_p\mid n$. Since also such primes are $O(n)$, it follows that
$$
S_{K,a}\ll \#P_{K,n}\log n,
$$
where $P_{K,a}(n):=\{p\le Kn: \ell_p\mid n\}$. To estimate $P_{K,a}(n)$ we fix a divisor $d$ of $n$ and look at primes $p\le Kn$ such that $\ell_p=d$. Such primes $p$ have the property that $p\equiv 1\pmod d$ by Fermat's Little Theorem. In particular, the number of such (without using results on primes in progressions) is at most
$$
\left\lfloor \frac{Kn}{d}\right\rfloor\le \frac{Kn}{d}.
$$
However, since these primes divide $a^d-1$, the number of them is $O(d)$. Thus, for a fixed $d$ the number of such primes is 
$$
\ll \min\left\{\frac{Kn}{d},d\right\}\ll  (Kn)^{1/2}.
$$
Summing this up over all divisors $d$ of $n$ we get that 
$$
\#P_{K,a}(n)\ll d(n) (Kn)^{1/2}\le K^{1/2} n^{1/2+o(1)}
$$
as $n\to\infty$, where we used $d(n)$ for the number of divisors of $n$ and the well-known estimate $d(n)=n^{o(1)}$ as $n\to\infty$ (see Theorem 315 in \cite{HW})). Hence, 
$$
S_{K,a}(n)\ll \#P_{K,a}(n)\log n\le K^{1/2} n^{1/2+o(1)}
$$
as $n\to \infty$, which is what we wanted. 
\end{proof}

\begin{remark}  The current Lemma \ref{lem:1} was supplied by the referee. Our initial statement was weaker. 
The combination between Lemma \ref{lem:1} and estimate \eqref{eq:SaK} shows that we can even take $K$ growing with $n$ such as $K=n^{1-\varepsilon}$ in the hypothesis of Theorem \ref{thm:1} 
and retain its conclusion. This has been also noticed in \cite{MuWo}. 
\end{remark}

\subsection{The proof of Theorem \ref{thm:2}} It is enough to prove an upper bound comparable to the upper bound from the right--hand side of \eqref{eq:3} for 
$\#({\mathcal A}_{K,c,{\bf u}}\cap (N/2,N])$ as then we can replace $N$ by $N/2$, then $N/4$, etc. and sum up the resulting inequalities. So, assume that 
$n\in (N/2,N]$. We estimate
$$
Q_N:=\prod_{n\in (N/2,N]} s_{KN}(u_n).
$$
On the one hand, since $s_{KN}(u_n)\ge s_{Kn}(u_n)\ge c^{n}\ge c^{N/2}$ for all $n\in {\mathcal A}_{K,c,{\bf u}}$, we get that 
$$
\log Q_N\gg N(\#{\mathcal A}_{K,c,{\bf u}}\cap (N/2,N]).
$$
Next, writing $\nu_p(m)$ for the exponent of $p$ in the factorisation of $m$, we have
\begin{equation}
\label{eq:logQ}
\log Q_N=\sum_{n\in (N/2,N]} \sum_{p\le KN}  \nu_p(u_n)\log p\le \sum_{p\le KN}\log p \sum_{n\in (N/2,N]} \nu_p(u_n).
\end{equation}
Let $o_p:=\nu_p(u_{\ell_p})$. It is well-known that if $p$ is odd then
$$
\nu_p(u_n)=\left\{\begin{matrix} o_p+\nu_p(n) & {\text{\rm if}} & \ell_p\mid n;\\ 0 & {\text{\rm otherwise}} & ~ \\
\end{matrix}\right.
$$
(see, for example, (66) in \cite{Ste}). 
In particular, if $p\mid u_n$, then $p^{o_p}\mid u_n$. Furthermore, for each $k\ge 0$, the exact power of $p$ in $u_n$ is $o_p+k$ if and only if $\ell_p p^k$ divides $n$ and 
$\ell_p p^{k+1}$ does not divide $n$. When $p=2$, we may assume that $a$ is odd (otherwise $\nu_2(u_n)=0$ for all $n\ge 1$), and the right--hand side of the above forrnula needs to be ammended 
to 
$$
\nu_2(u_n) =\left\{\begin{matrix} o_2 & {\text{\rm if}} & 2\nmid n;\\
o_p+\nu_2(a+1)+\nu_2(n/2) & {\text{\rm if}} & 2\mid n.\\
\end{matrix}\right.
$$
Thus, for odd $p$, 
\begin{equation}
\label{eq:x}
\sum_{n\in (N/2,N]}\nu_p(u_n)=o(p)\#\{N/2<n\le N: \ell_p\mid n\}+\sum_{k\ge 1} \#\{N/2<n\le N: \ell_p p^k\mid n\}.
\end{equation}
A similar formula holds for $p=2$. In particular, for $p=2$, we have 
$$
\sum_{n\in (N/2,N]} \nu_2(u_n)=O(N).
$$
Thus, the prime $p=2$ contributes a summand of size $O(N)$ to the right--hand side of \eqref{eq:logQ}. From now on, we assume that $p$ is odd. The first cardinality in the right--hand side of formula \eqref{eq:x} above is 
$$
\#\{N/2<n\le N: \ell_p\mid n\} \le \left\lfloor \frac{N}{2\ell_p}\right\rfloor+1\ll \frac{N}{\ell_p}.
$$
The remaining cardinalities on the right--above can be bounded as 
$$
 \#\{N/2<n\le N: \ell_p p^k\mid n\}\le \left \lfloor \frac{N}{2\ell_p p^k}\right\rfloor+1\ll \frac{N}{\ell_p p^k}.
 $$
Thus,
$$
\sum_{n\in (N/2,N]}\nu_p(u_n)\ll \frac{N o_p}{\ell_p}+\sum_{k\ge 1} \frac{N}{\ell_p p^k}\ll \frac{N o_p}{\ell_p}+\frac{N}{\ell_p p}.
$$
We thus get
$$
\log Q_N\ll N\sum_{p\le Kn} \frac{o_p \log p}{\ell_p}+N\sum_{p\le Kn} \frac{\log p}{\ell_p p} \ll N\sum_{p\le Kn} \frac{o_p \log p}{\ell_p} :=S.
$$
It remains to bound $S$. Since $p^{o_p}\mid a^{\ell_p}-1$, we get that $p^{o_p}<a^{\ell_p}$ so $o_p\log p\ll \ell_p$. Hence, 
$$
S=N\sum_{p\le KN} \frac{o_p\log p}{\ell_p}\ll N\pi(KN)\ll_K \frac{N^2}{\log N}.
$$
We get the first non--trivial upper bound on $\#({\mathcal A}_{K,c,{\bf u}}\cap (N/2,N]$, namely
$$
N\#({\mathcal A}_{K,c,{\bf u}}\cap (N/2,N])\ll \log Q_N\ll S \ll \frac{N^2}{\log N}+N\log\log N\ll_K \frac{N^2}{\log N},
$$
so 
$$
 \#({\mathcal A}_{K,c,{\bf u}}\cap (N/2,N])\ll_K \frac{N}{\log N}.
$$
To do better, we need to look more closely at $o_p\log p/\ell_p$ for primes $p\le KN$. We split the sum $S$ over primes $p\le KN$ in two subsums. The first is over the primes in the set 
$Q_1$ consisting of $p$ such that $o_p\log p/\ell_p<1/y_N$, where $y_N$ is some function of $N$ which we will determine later. We let $Q_2$ be the complement of $Q_1$ in the set of primes $p\le Kn$. The sum over primes 
$p\in Q_1$ is 
$$
S_1=N\sum_{p\in Q_1}\frac{o_p\log p}{\ell_p}\le \frac{N}{y_N}\pi(KN)\ll_K \frac{N^2}{y_N\log N}.
$$
For $Q_2$, we use the trivial estimate
$$
S_2=N\sum_{p\in Q_2}\frac{o_p\log p}{\ell_p}\ll N\#Q_2,
$$
and it remains to estimate the cardinality of $Q_2$. Note that $Q_2$ consists of primes $p$ such that $o_p>\ell_p/(y_N\log p)\gg \ell_p/(y_N\log N)$. 
We put $\ell_p$ in dyadic intervals. That is $\ell_p\in (2^i,2^{i+1}]$ for some $i\ge 0$. Then primes $p\le KN$ in $Q_2$ with such $\ell_p$ have the property that 
$o_p\gg 2^i/(y_N\log N)$. Hence,
\begin{eqnarray*}
\frac{2^i\#(Q_2\cap (2^i,2^{i+1}])}{y_N \log N} & \ll & \sum_{p\in Q_2\cap (2^i,2^{i+1}]} \nu_p(a^{\ell_p}-1)\log p\le \sum_{\ell \in (2^i,2^{i+1}]} \log(a^{\ell}-1)\\
& \ll &  \sum_{\ell\in (2^i,2^{i+1}]}\ell \ll 2^{2i},
\end{eqnarray*}
which gives 
$$
\#(Q_2\cap (2^i,2^{i+1}])\ll 2^i y_N\log N.
$$
Summing up over all the $i$, we get 
$$
\#Q_2\le 2^I y_N\log N,
$$
where $I$ is maximal such that $(2^I,2^{I+1}]$ contains an element $p$ of $Q_2$. By a result of Stewart (see Lemma 4.3 in \cite{Ste}),
\begin{eqnarray*}
2^I & < & \ell_p<o_p y_N\log N<p\exp\left(-\frac{\log p}{51.9\log\log p}\right)y_N \log N \log \ell_p\\
& \ll & KN\exp\left(-\frac{\log(Kn)}{51.9\log\log(KN)}\right)y_N\log(KN)^2\\
& \ll_K &  
N\exp\left(-\frac{\log N}{51.95\log\log N}\right) y_N(\log N)^2.
\end{eqnarray*}
Thus,
\begin{eqnarray*}
\#Q_2 & \ll & 2^I y_N\log N\ll_K  N\exp\left(-\frac{\log N}{51.95\log\log N}\right) y_N^2(\log N)^3\\
& \ll & N\exp\left(-\frac{\log N}{52\log\log N}\right) y_N^2.
\end{eqnarray*}
Choosing $y_N:={\displaystyle{\exp\left(c\frac{\log N}{\log\log N}\right)}}$ with a positive constant $c$ to be determined later, we get 
\begin{eqnarray*}
N\#({\mathcal A}_{K,c,{\bf u}}\cap (N/2,N]) & \ll &  N\#Q_2+\frac{N}{y_N\log N}\\
& \ll _K & N\left(\exp\left(\left(2c-\frac{1}{52}\right)\frac{\log N}{\log\log N}\right)+\exp\left(-\frac{c\log N}{\log\log N}\right)\right).
\end{eqnarray*}
Choosing $c:=1/156$, we get 
$$
\#({\mathcal A}_{K,c,{\bf u}}\cap(N/2,N])\ll N\log N \exp\left(-\frac{\log N}{156\log\log N}\right),
$$
which is what we wanted. 

\subsection*{Acknowledgements}

We thank the referee for suggesting the current Lemma \ref{lem:1} with its proof and for pointing out reference \cite{MuWo}.

\end{document}